%-----------------------------------------------------------------------
% Beginning of article.tex
%-----------------------------------------------------------------------
%
% AMS-LaTeX 1.2 sample file for book proceedings, based on amsproc.cls.
%
% Replace amsproc by the documentclass for the target series, e.g. pspum-l.
%
\documentclass{amsproc}

\usepackage{graphicx,float}
\usepackage{amsfonts,amsmath,latexsym,amssymb,amsthm}

\DeclareMathOperator{\Aut}{\mathrm{Aut}}
\DeclareMathOperator{\Li}{\mathrm{Li}}
\DeclareMathOperator{\Norm}{\mathrm{Norm}}

 \newtheorem{theorem}{Theorem}[section]
 \newtheorem{lemma}[theorem]{Lemma}
 
 \newtheorem{question}[theorem]{Question}

 \theoremstyle{definition}

 \theoremstyle{remark}
 
 \newcommand{\mc}{\mathcal}

 \newcommand{\D}{\mc{D}}

 \newcommand{\pP}{\mathfrak{P}}
 \newcommand{\pp}{\mathfrak{p}}

 \newcommand{\A}{\mathbb{A}}
 \newcommand{\C}{\mathbb{C}}
 
 \newcommand{\R}{\mathbb{R}}
 
 \newcommand{\Q}{\mathbb{Q}}
 \newcommand{\oQ}{\overline{\Q}}
 \newcommand{\Z}{\mathbb{Z}}

 \newcommand{\bbalpha}{\overline{\alpha}}

 \newcommand{\h}{\frac12}

 \newcommand{\dt}{\text{\rm d}t}

 %\newcommand{\dlnu}{\text{\rm d}\nu_l(\lambda)}

%    Blank box placeholder for figures (to avoid requiring any
%    particular graphics capabilities for printing this document).

\begin{document}

\title{A note on generators of number fields}

%    Information for first author
\author{Jeffrey D. Vaaler}
%    Address of record for the research reported here
\address{Department of Mathematics, University of Texas at Austin, 1 University Station C1200, Austin, Texas 78712}
%    Current address
\email{vaaler@math.utexas.edu}
%    \thanks will become a 1st page footnote.
\thanks{The first author was supported in part by NSA Grant \#H92380-12-1-0254.}

%    Information for second author
\author{Martin Widmer}
\address{Department for Analysis and Computational Number Theory,
Graz University of Technology, 8010 Graz, Austria}
\email{widmer@math.tugraz.at}
\thanks{The second author was supported by an FWF grant \#M1222-N13.}

%    General info
\subjclass{Primary 11R04, 11G50; Secondary 11R06, 11R29}
\date{\today, and in revised form ....}

\dedicatory{}

\keywords{Algebraic number theory, small height}

\begin{abstract}
We establish upper bounds for the smallest height of a generator of a number field $k$ over the rational field $\Q$. 
Our first bound applies to all number fields $k$ having at least one real embedding.  We also give a second conditional result for all
number fields $k$ such that the Dedekind zeta-function associated to the Galois closure of $k/\Q$ satisfies GRH.  This provides
a partial answer to a question of W. Ruppert.
\end{abstract}

\maketitle

% \section*{This is an unnumbered first-level section head}
% This is an example of an unnumbered first-level heading.

% \specialsection*{This is a Special Section Head}
% This is an example of a special section head%
%%%%%%%%%%%%%%%%%%%%%%%%%%%%%%%%%%%%%%%%%%%%%%%%%%%%%%%%%%%%%%%%%%%%%%%%
%\footnote{Here is an example of a footnote. Notice that this footnote
%text is running on so that it can stand as an example of how a footnote
%with separate paragraphs should be written.
%\par
%And here is the beginning of the second paragraph.}%
%%%%%%%%%%%%%%%%%%%%%%%%%%%%%%%%%%%%%%%%%%%%%%%%%%%%%%%%%%%%%%%%%%%%%%%%

\numberwithin{equation}{section}

\section{Introduction}%%%%%%%%%%%%%%%%%%%%%%%%%%%%%%%%%%%%%%%%%%%%%%%%%%%%%%%%%%%%%

Let $\oQ$ denote an algebraic closure of the field $\Q$ of rational numbers, and for $\alpha$ in $\oQ$ let
$H(\alpha)$ denote the absolute, multiplicative Weil height of $\alpha$.  If $k \subseteq \oQ$ is an algebraic number field, we
consider the problem of showing that $k = \Q(\alpha)$, where the height of $\alpha$ can be estimated by invariants of $k$.  In
particular we are interested in showing that $k$ is generated over $\Q$ by an element $\alpha$
that has relatively small height.  

Let $\Delta_k$ denote the discriminant of the number field $k$.  In \cite[Question 2]{Rupp} W.~Ruppert proposed the following more 
precise question.

\begin{question}{\sc [Ruppert, 1998]}\label{con1}  Does there exist a positive constant $B = B(d)$ such that, if $k$ is an algebraic 
number field of degree $d$ over $\Q$, then there exists an element $\alpha$ in $k$ such that
\begin{equation}\label{gen2}
k = \Q(\alpha),\quad\text{and}\quad H(\alpha) \le B \bigl|\Delta_k\bigr|^{1/2d}?
\end{equation} 
\end{question}

In fact Ruppert stated this question using the naive height of $\alpha$, but elementary inequalities between the two heights imply 
that (\ref{gen2}) is equivalent to the bound that Ruppert proposed.  Ruppert himself gave a positive answer to Question \ref{con1}
for $d=2$ (\cite[Proposition 2]{Rupp}), and also for totally real number fields $k$ of prime degree (\cite[Proposition 3]{Rupp}).  
The analogous question for function fields in positive characteristic has been answered positively by the second author in \cite{art5}.
In this note we give a positive answer for all number fields $k$ having at least one embedding into $\R$, and with a constant 
$B \le 1$ for all such $k$.  Our argument is a simple application of Minkowski's first theorem in the geometry of numbers over the 
adele space $k_{\A}$ associated to $k$. 

\begin{theorem}\label{thm1}  Let $k$ be an algebraic number field with degree $d$ over $\Q$, and assume that $k$ has 
at least one embedding into $\R$.  Then there exists an element $\alpha$ in $k$, such that
\begin{equation}\label{gen20}
k = \Q(\alpha),\quad\text{and}\quad H(\alpha) \le \Bigl(\frac{2}{\pi}\Bigr)^{s/d} \bigl|\Delta_k\bigr|^{1/2d},
\end{equation} 
where $s$ is the number of complex places of $k$.
\end{theorem}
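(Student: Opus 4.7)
The plan is to apply Minkowski's first convex body theorem to the ring of integers $\mathcal{O}_k$, viewed as a full-rank lattice in $V := \R^{r_1} \times \C^s$ under the archimedean embeddings of $k$ (so $r_1 + 2s = d$). Recall the standard computation $\operatorname{covol}(\mathcal{O}_k) = 2^{-s}|\Delta_k|^{1/2}$. By hypothesis, I single out one real embedding $\sigma_1 \colon k \to \R$.

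Set $T = (2/\pi)^s |\Delta_k|^{1/2}$. For every $\varepsilon > 0$, I would consider the open symmetric convex box
\[
S_\varepsilon := \bigl\{ x \in V : |x_{\sigma_1}| < T + \varepsilon,\ |x_\sigma| < 1 \text{ for every other archimedean place } \sigma \bigr\}.
\]
A direct calculation gives $\operatorname{vol}(S_\varepsilon) = 2(T+\varepsilon) \cdot 2^{r_1-1} \cdot \pi^s > 2^d \operatorname{covol}(\mathcal{O}_k)$, so Minkowski's theorem produces a nonzero $\alpha_\varepsilon \in \mathcal{O}_k \cap S_\varepsilon$. Since these elements have uniformly bounded Weil height, Northcott's finiteness principle lets me extract a single $\alpha \in \mathcal{O}_k \setminus \{0\}$ satisfying $|\sigma_1(\alpha)| \le T$ and $|\sigma(\alpha)| < 1$ at every other archimedean place. (The classical adelic Minkowski of Bombieri--Vaaler packages exactly this argument when the non-archimedean local factors are the local rings of integers.)

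The main obstacle is to verify that this $\alpha$ actually generates $k$ over $\Q$. Suppose that it does not, and let $F = \Q(\alpha) \subsetneq k$ be the subfield it generates. Then $[F:\Q]$ is a proper divisor of $d$, hence at most $d/2$, so each value $\sigma(\alpha)$ as $\sigma$ ranges over the $d$ embeddings $k \hookrightarrow \oQ$ is attained at least $d/[F:\Q] \ge 2$ times. In particular $\sigma_j(\alpha) = \sigma_1(\alpha)$ for some $j \neq 1$, forcing $|\sigma_1(\alpha)| = |\sigma_j(\alpha)| \le 1$. Consequently every conjugate of $\alpha$ has absolute value at most one, so Kronecker's theorem forces $\alpha$ to be a root of unity; but because $k$ has a real embedding, the only roots of unity in $k$ are $\pm 1$, and these are ruled out by the strict inequality $|\sigma(\alpha)| < 1$ at any non-distinguished place. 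This is precisely where the real-embedding hypothesis enters the argument.

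Once $\alpha$ is known to generate $k$, the height bound is immediate. Since $\alpha \in \mathcal{O}_k$, all non-archimedean local factors of the height are trivial, and so
\[
H(\alpha)^d = \prod_\sigma \max\{1,\ |\sigma(\alpha)|\} \le T \cdot 1^{d-1} = \Bigl(\tfrac{2}{\pi}\Bigr)^s |\Delta_k|^{1/2},
\]
yielding $H(\alpha) \le (2/\pi)^{s/d}|\Delta_k|^{1/(2d)}$ as required.
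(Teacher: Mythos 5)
Your proof is correct and follows essentially the same route as the paper: both apply Minkowski's first theorem (the paper in its adelic form, you in the classical form on $\mathcal{O}_k \subset \R^{r_1}\times\C^{s}$, which is exactly what the paper's choice of idele $\gamma$ reduces to) to produce a nonzero algebraic integer that is $<1$ in absolute value at every archimedean place except one distinguished real place. The only cosmetic difference is the generation step, which you settle by counting conjugates and invoking Kronecker's theorem, where the paper compares local degrees $[k_w : k_u^{\prime}]$; both arguments turn on the same point, namely that a proper subfield would force every conjugate of $\alpha$ strictly inside the unit disc, which is impossible for a nonzero algebraic integer.
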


If all the embeddings of $k$ into $\C$ are complex, then the situation is more complicated.  In this case an alternative application of 
Minkowski's first theorem over $k_{\A}$ leads to a bound that depends on the existence of certain rational prime numbers $p$
such that the principal ideal $\langle p \rangle$ in $O_k$ has a prime ideal factor
with residue class degree equal to $1$.  It is also necessary that a finite product of such rational primes be slightly larger than $|\Delta_k|^{\h}$.
In order to establish the existence of such rational primes, we must assume that the Dedekind zeta-function 
$\zeta_l(s)$ associated with the Galois closure $l$ of the extension $k/\Q$ satisfies the Generalized Riemann Hypothesis. This 
leads to a bound for the height of a generator as anticipated by Ruppert in (\ref{gen2}), and 
with $B = B(d)$ depending effectively on the degree $d$.

\begin{theorem}\label{thm2}  For each integer $d \ge 2$ there exists an effectively computable positive number $B = B(d)$ having 
the following property.  Let $k$ be an algebraic number field with degree $d$ over $\Q$, let $l$ be the Galois closure of 
the extension $k/\Q$, and assume that the Dedekind zeta-function $\zeta_l(s)$ associated to $l$ satisfies the 
Generalized Riemann Hypothesis.  Then there exists an element $\alpha$ in $k$, such that
\begin{equation}\label{gen90}
k = \Q(\alpha),\quad\text{and}\quad H(\alpha) \le B \bigl|\Delta_k\bigr|^{1/2d}.
\end{equation} 
\end{theorem}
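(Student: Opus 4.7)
My plan is to apply Minkowski's first theorem over $k_\A$ to produce an element $\alpha\in k$ whose divisor has a simple pole at a single prime $\pp$ of $O_k$ with residue class degree $f(\pp|p)=1$, where $p$ is a rational prime of size roughly $|\Delta_k|^{1/2}$. Such a simple pole both controls the Weil height (since $\mathrm{N}(\pp)=p$) and forces $\Q(\alpha)=k$; the role of GRH for $\zeta_l(s)$ is only to supply a rational prime $p$ with the correct splitting type in a narrow window, via effective Chebotarev.

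\emph{Locating $p$.} An unramified rational prime $p$ admits a prime ideal factor of residue class degree one in $O_k$ exactly when its Frobenius conjugacy class in $G=\mathrm{Gal}(l/\Q)$ meets $S=\bigcup_{\tau\in G}\tau H\tau^{-1}$, where $H=\mathrm{Gal}(l/k)$. Since $H\subseteq S$, the density $|S|/|G|$ is at least $1/d>0$. Under GRH for $\zeta_l$, the Lagarias--Odlyzko effective Chebotarev density theorem, combined with the standard tower bound $\log|\Delta_l|\ll_d \log|\Delta_k|$, produces such a prime in the interval $[c_1(d)|\Delta_k|^{1/2},c_2(d)|\Delta_k|^{1/2}]$ once $|\Delta_k|$ exceeds an effective threshold $D_0(d)$. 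Hermite's finiteness theorem handles the remaining range $|\Delta_k|\le D_0(d)$ by absorbing it into the constant $B(d)$.

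\emph{Minkowski over $k_\A$.} Fix such $p$ and a degree-one prime $\pp\mid p$. Consider the symmetric convex adelic set $E=\prod_v E_v\subset k_\A$ defined by $E_v=\{|z|<1\}$ (open interval or disk) at each archimedean place, $E_\pp=\pp^{-1}O_{k_\pp}$ at $\pp$, and $E_v=O_{k_v}$ at every other finite place. The adelic measure of $E$ equals a constant depending only on $d$ times $p$, so with $c_1(d)$ chosen appropriately Minkowski's first theorem produces a nonzero $\alpha\in k\cap E$. By construction this $\alpha$ satisfies $v_\qq(\alpha)\ge 0$ for every finite $\qq\ne\pp$, $v_\pp(\alpha)\ge -1$, and $|\sigma(\alpha)|<1$ at every archimedean embedding $\sigma$.

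\emph{Pole, height, and generation.} If $v_\pp(\alpha)\ge 0$ then $\alpha\in O_k$ with $|\sigma(\alpha)|<1$ for every $\sigma$, so $|N_{k/\Q}(\alpha)|<1$, which combined with $N_{k/\Q}(\alpha)\in\Z$ forces $\alpha=0$, a contradiction. Hence $v_\pp(\alpha)=-1$, whence
\[
H(\alpha)^d=\prod_v\max(1,|\alpha|_v)^{d_v}=|\alpha|_\pp^{d_\pp}=\mathrm{N}(\pp)=p\le c_2(d)|\Delta_k|^{1/2}.
\]
For generation, set $k'=\Q(\alpha)$ and $\qq'=\pp\cap O_{k'}$. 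The only prime of $O_{k'}$ that can appear in the denominator of $(\alpha)_{k'}$ is $\qq'$, since any other such prime would force a non-$\pp$ pole of $\alpha$ in $k$; hence this denominator is $\qq'^c$ for some $c\ge 1$. Extending to $O_k$ and matching against the known $k$-denominator $\pp$ shows that $\pp$ is the unique prime of $k$ above $\qq'$ and $c\cdot e(\pp|\qq')=1$, so $c=e(\pp|\qq')=1$. Then $[k:k']=f(\pp|\qq')$, which divides $f(\pp|p)=1$, yielding $k'=k$. The main obstacle is the need for \emph{strict} archimedean inequality $|\sigma(\alpha)|<1$: without it the Minkowski solution could be a nontrivial root of unity in $O_k$ and supply no pole, and the Kronecker-type argument above would fail; passing to the open body (of the same Lebesgue volume) circumvents this at no cost, provided the Chebotarev window in the previous step is wide enough, which is precisely what GRH affords.
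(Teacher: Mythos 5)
Your proposal is correct and follows essentially the same route as the paper: effective Chebotarev under GRH for $\zeta_l$ (plus the bound $\log|\Delta_l|\ll_d\log|\Delta_k|$ and Hermite for small discriminants) to locate a prime $p\asymp_d|\Delta_k|^{1/2}$ with a degree-one factor, then adelic Minkowski on a box dilated by $\pi_\pp^{-1}$ at a single such $\pp$, with the forced simple pole at $\pp$ giving both the height bound $p^{1/d}$ and, via $e(\pp|\qq')=f(\pp|\qq')=1$ and uniqueness of the prime above $\qq'$, the generation $\Q(\alpha)=k$. The only (harmless) deviations are that you detect primes whose Frobenius meets $\bigcup_\tau\tau H\tau^{-1}$ rather than the paper's completely split primes ($C=\{1\}$), and you phrase the generation step with denominator ideals rather than places.
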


 We obtain this conditional result by applying an effective version of  the Chebotarev density theorem proved by Lagarias and 
Odlyzko in \cite{lo1977}.

\section{Preliminaries}%%%%%%%%%%%%%%%%%%%%%%%%%%%%%%%%%%%%%%%%%%%%%%%%%%%%%%%%%%%%
We suppose that $r$ is the number of real places of $k$, and $s$ is the number of 
complex places of $k$, so that $r + 2s = d$.   Then we define the field constant
\begin{equation}\label{gen0}
c_k = \Bigl(\frac{2}{\pi}\Bigr)^{s/d} \bigl|\Delta_k\bigr|^{1/2d}.
\end{equation}
We have
\begin{equation*}\label{gen1}
\Bigl(\frac{2}{\pi}\Bigr)^{\h} \le \Bigl(\frac{2}{\pi}\Bigr)^{s/d} \le1,
\end{equation*}
so that this factor is of no significance for our purposes here.  However, the constant $c_k$ occurs naturally
in a basic formula for the Haar measure of certain subsets of the adele ring $k_{\A}$ associated 
to $k$.  At each place $v$ of $k$ we let $k_v$ denote the completion
of $k$ with respect to an absolute value from $v$, and we write $d_v = d_v(k/\Q) = [k_v:\Q_v]$ for the local degree at $v$.  
Let $\|\ \|_v$ be the unique absolute value at the place $v$ which extends either the usual Euclidean absolute value on $\Q$, or the 
usual $p$-adic absolute value on $\Q$.  We also define a second absolute value $|\ |_v$ from the place $v$ by setting
\begin{equation*}\label{gen41}
|\ |_v = \|\ \|_v^{d_v/d}.
\end{equation*}
At each place $v$ we define $O_v\subseteq k_v$ by
\begin{equation*}\label{gen3}
O_v = \begin{cases} \{\xi\in k_v: |\xi|_v < 1\}& \text{if $v|\infty$},\\
                                     \{\xi\in k_v: |\xi|_v \le 1\}& \text{if $v\nmid\infty$}.\end{cases}
\end{equation*}
It follows that 
\begin{equation*}\label{gen4}
\prod_v O_v \subseteq k_{\A},
\end{equation*}
and from the product formula we get
\begin{equation*}\label{gen5}
 k\cap \prod_v O_v = \{0\}.
\end{equation*}

At each place $v$ of $k$ we select a Haar measure $\beta_v$ defined on the Borel subsets of $k_v$ and
normalized as follows.  If $v$ is a real place then $\beta_v$ is the ordinary Lebesgue measure, and if $v$ is a complex
place then $\beta_v$ is the Lebesgue measure on $\C$ multiplied by $2$.  If $v$ is a non-archimedean place then
we require that
\begin{equation*}\label{haar1}
\beta_v(O_v) = |\D_v|_v^{d/2},
\end{equation*}
where $\D_v$ is the local different of $k$ at $v$.  Now let $S$ be a finite subset of places of $k$ containing all the archimedean
places, and let
\begin{equation*}\label{haar3}
k_{\A}(S) = \prod_{v\in S} k_v \times \prod_{v\notin S} O_v
\end{equation*}
be the corresponding open subgroup of the additive group of $k_{\A}$.  We write $\beta$ for the unique Haar measure on the
Borel subsets of $k_{\A}$ such that the restriction of $\beta$ to each open subgroup $k_{\A}(S)$ is the product measure
\begin{equation*}\label{haar4}
\prod_v \beta_v.  
\end{equation*}
The Haar measure $\beta$, normalized in this way, has the property that it induces a Haar measure $\beta^{\prime}$ on 
the Borel subsets of the compact quotient group $k_{\A}/k$ such that
\begin{equation*}\label{haar5}
\beta^{\prime}\bigl(k_{\A}/k\bigr) = 1.
\end{equation*}
Using the basic identity
\begin{equation}\label{haar6}
\prod_{v \nmid \infty} |\D_v|_v^{-d} = |\Delta_k|,
\end{equation}
we find that
\begin{equation}\label{gen6}
\beta\Big\{\prod_v O_v\Big\} = 2^d \Bigl(\frac{\pi}{2}\Bigr)^s \bigl|\Delta_k\bigr|^{-1/2} = 2^d c_k^{-d}.
\end{equation}

Suppose more generally that $\gamma = (\gamma_v)$ is an element of the multiplicative group $k_{\A}^{\times}$ of
ideles associated to $k$.  Then at each place $v$ of $k$ we have
\begin{equation*}\label{gen7}
\gamma_v O_v = \begin{cases} \{\xi\in k_v: |\xi|_v < |\gamma_v|_v\}& \text{if $v|\infty$},\\
                                                          \{\xi\in k_v: |\xi|_v \le |\gamma_v|_v\}& \text{if $v\nmid\infty$}.\end{cases}
\end{equation*}
It follows that
\begin{equation*}\label{gen8}
\prod_v \gamma_v O_v \subseteq k_{\A},
\end{equation*}
and 
\begin{align}\label{gen9}
\begin{split}
\beta\Big\{\prod_v \gamma_v O_v\Big\}  &= \prod_v \beta_v(\gamma_v O_v)\\
                                                                        &= \prod_v \Big\{\|\gamma_v\|_v^{d_v} \beta_v(O_v)\Big\}\\
                                                                        &= 2^d c_k^{-d} \Big\{\prod_v |\gamma_v|_v\Big\}^d.
\end{split}
\end{align}
If 
\begin{equation*}\label{gen10}
\prod_v |\gamma_v|_v \le 1, 
\end{equation*}
then from the product formula we get
\begin{equation*}\label{gen11}
 k\cap \prod_v \gamma_vO_v = \{0\}.
 \end{equation*}
On the other hand, if 
\begin{equation}\label{gen12}
c_k < \prod_v |\gamma_v|_v,
\end{equation}
then there exists a nonzero point $\alpha$ in
\begin{equation*}\label{gen13}
k\cap \prod_v \gamma_v O_v.
\end{equation*}
That is, $\alpha \not= 0$ belongs to $k$ and satisfies the system of inequalities
\begin{equation}\label{gen14}
|\alpha|_v < |\gamma_v|_v\quad\text{if}\quad v | \infty,
\end{equation}
and
\begin{equation}\label{gen15}
|\alpha|_v \le |\gamma_v|_v\quad\text{if}\quad v \nmid\infty.
\end{equation}
The existence of $\alpha$ follows immediately from the adelic version of Minkowski's first theorem, (see \cite[Theorem  3]{BV}
for a more detailed account of geometry of numbers over adeles spaces.)

\section{Proof of Theorem \ref{thm1}}%%%%%%%%%%%%%%%%%%%%%%%%%%%%%%%%%%%%%%%%%%%%%%%%%

If the number field $k$ has an embedding into $\R$, then there exists an archimedean place 
$w$ of $k$ such that $k_w = \R$ and $[k_w: \Q_{\infty}] = 1$.   Let $\rho$ be a real parameter such that $c_k < \rho$.
We select $\gamma = (\gamma_v)$ in $k_{\A}^{\times}$ so that
\begin{equation*}\label{gen21}
|\gamma_v|_v = \begin{cases}  1 & \text{if $v|\infty$ and $v \not= w$,}\\
                                                      \rho & \text{if $v = w$,}\\
                                                         1 & \text{if $v \nmid \infty$}.\end{cases} 
\end{equation*} 
Because $c_k < \rho$ we have
\begin{equation*}\label{gen22}
c_k < \rho = \prod_v |\gamma_v|_v,
\end{equation*}
and this verifies (\ref{gen12}).  Hence there exists a point $\alpha \not= 0$ in
\begin{equation}\label{gen23}
k\cap \prod_v \gamma_v O_v.
\end{equation}

If $w$ is the {\it only} archimedean place of $k$ then 
\begin{equation*}\label{gen24}
[k : \Q] = [k_w : \Q_{\infty}] = 1,
\end{equation*} 
and the statement of the theorem is trivial.  Hence we may assume that $k$ has at least two archimedean 
places.  Then (\ref{gen14}) and (\ref{gen15}) imply that $\alpha$ satisfies the inequalities
\begin{equation}\label{gen25}
1 < |\alpha|_w < \rho,\quad\text{and}\quad H(\alpha) \le \prod_v \max\{1, |\gamma_v|_v\} = \rho.
\end{equation}
Let $\Q(\alpha) = k^{\prime} \subseteq k$, and let 
$u$ be an infinite place of $k^{\prime}$ such that $w|u$.  Because $\alpha$ belongs to $k^{\prime}$, the
map $v\mapsto \|\alpha\|_v$ is constant on the set of places $v$ of $k$ such that $v|u$.  It follows from our
choice of $\gamma = (\gamma_v)$ that $w$ is the only place of $k$ that lies over $u$.  Then we have
\begin{equation*}\label{gen26}
[k : k^{\prime}] = [k_w : k_u^{\prime}] \le [k_w : \Q_{\infty}] = 1,
\end{equation*}
and therefore $k = k^{\prime} = \Q(\alpha)$.  The inequality on the right of (\ref{gen25}) shows that
$H(\alpha) \le \rho$ must hold for every positive number $\rho$ such that $c_k < \rho$.  Since the set of points in
$k$ with height bounded by a constant is finite, we conclude that $H(\alpha) \le c_k$.  This proves Theorem \ref{thm1}.

We note that for the collection of algebraic number fields $k$ having an embedding into $\R$,
Ruppert's Question \ref{con1} has a positive answer with $B = 1$.  In particular, for this collection of number fields the 
constant $B$ is independent of the degree of $k$ over $\Q$.

\section{A general strategy}%%%%%%%%%%%%%%%%%%%%%%%%%%%%%%%%%%%%%%%%%%%%%%%%%%%%%%%

In this section we consider an alternative argument using Minkowski's first theorem, but with a different choice of 
$\gamma = (\gamma_v)$.  In view of Theorem \ref{thm1} we are mainly interested in the case where $k$ has no real 
embedding, but the argument we develop here applies to all number fields $k$.
Let $P$ be a finite set of rational prime numbers.  Assume that for each prime $p$ in $P$ there exists a place $v$ of $k$ such that
\begin{equation}\label{gen30}
v|p\quad\text{and}\quad f_v(k/\Q) = 1,
\end{equation}
where $f_v(k/\Q)$ is the residue class degree of the place $v$ (or alternatively, the residue class degree of the 
associated prime ideal
\begin{equation*}\label{gen31}
\pp = \{\xi \in O_k: |\xi|_v < 1\},
\end{equation*}
where $O_k$ is the ring of algebraic integers in $k$.)
Then let $S$ be a set of non-archimedean places of $k$ selected so that $S$ contains exactly one place $v$ of
$k$ for each prime number $p$ in $P$, and the places $v$ in $S$ satisfy (\ref{gen30}).  Obviously we have $|P| = |S|$.

Recall that if $v$ is a finite place, then the positive integer $d_v(k/\Q)$ factors as 
\begin{equation}\label{gen39}
d_v(k/\Q) = e_v(k/\Q) f_v(k/\Q),
\end{equation}
where $e_v = e_v(k/\Q)$ is the index of ramification and $f_v = f_v(k/\Q)$ is the local residue class degree.  If $v|p$
then we write $\pi_v$ for an element that generates the unique maximal ideal in the integral domain $O_v$.  We find that
\begin{equation}\label{gen40}
\|\pi_v\|_v^{e_v} = p^{-1},\quad\text{and}\quad |\pi_v|_v = p^{-f_v/d}.
\end{equation} 
Next we select $\gamma = (\gamma_v)$ in $k_{\A}^{\times}$ so that
\begin{equation}\label{gen45}
\gamma_v = \begin{cases}  \pi_v^{-1} & \text{if $v$ belongs to $S$,}\\
                                   1 & \text{if $v$ does not belong to $S$.}\end{cases}
\end{equation}
Using (\ref{gen9}) and (\ref{gen40}) we find that 
\begin{equation}\label{gen47}
\beta\Big\{\prod_v \gamma_v O_v\Big\} = \Big\{2 c_k^{-1} \prod_{v \in S} |\pi_v^{-1}|_v\Big\}^d = \big\{2 c_k^{-1}\big\}^d \prod_{p \in P} p.
\end{equation}
We now assume that $P$ is selected so that
\begin{equation}\label{gen50}
c_k < \Big\{\prod_{p\in P} p\Big\}^{1/d}.
\end{equation}
By the adelic form of Minkowski's first theorem there exists a nonzero point $\alpha$ in the set
\begin{equation}\label{gen51}
k \cap \prod_v \gamma_v O_v.
\end{equation}

\begin{theorem}\label{thmgen2}  Let $P$ be a finite set of rational primes satisfying the above conditions, and in particular
satisfying the inequality {\rm (\ref{gen50})}.  Then for each nonzero point $\alpha$ contained in the set {\rm (\ref{gen51})}, we have
\begin{equation}\label{gen63}
\Q(\alpha) = k,\quad\text{and}\quad H(\alpha) \le \Big\{\prod_{p \in P} p\Big\}^{1/d}.
\end{equation}
\end{theorem}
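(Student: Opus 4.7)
My plan splits in two. For the height bound, the inclusion $\alpha \in \prod_v \gamma_v O_v$ gives $|\alpha|_v \le |\gamma_v|_v$ at non-archimedean $v$ and the strict inequality at archimedean $v$. Using (\ref{gen40}) together with $f_v(k/\Q) = 1$ for $v \in S$, one has $|\gamma_v|_v = |\pi_v|_v^{-1} = p^{1/d}$, while $|\gamma_v|_v = 1$ at all other places. Hence $\max\{1,|\alpha|_v\} \le p^{1/d}$ for $v \in S$ and equals $1$ elsewhere, yielding
\begin{equation*}
H(\alpha) = \prod_v \max\{1,|\alpha|_v\} \le \Bigl(\prod_{p \in P} p\Bigr)^{1/d}.
\end{equation*}

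For the generation statement $\Q(\alpha) = k$, I would set $k' = \Q(\alpha)$ and argue by contradiction under $[k:k'] \ge 2$. A non-zero $\alpha$ must satisfy $|\alpha|_v > 1$ at some place $v$: otherwise, since $|\alpha|_v < 1$ at every archimedean $v$ and $|\alpha|_v \le 1$ at every non-archimedean $v$, the product $\prod_v |\alpha|_v$ would be strictly less than $1$, violating the product formula. By (\ref{gen14})--(\ref{gen15}) such a $v$ lies in $S$. Let $u$ be the place of $k'$ below $v$. Because $\alpha \in k'$, the value $\|\alpha\|_{v'}$ is constant on $\{v' : v'|u\}$, so any further $v'|u$ in $k$ would satisfy $\|\alpha\|_{v'} > 1$ and hence lie in $S$; since $S$ contains exactly one place over each rational prime, $v$ is the unique place of $k$ above $u$, giving $[k_v : k_u'] = [k:k']$. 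Combining $f_v(k/\Q) = 1$ with multiplicativity of residue class degrees in the tower $\Q \subseteq k' \subseteq k$ forces $f_v(k/k') = 1$, so $[k:k'] = e_v(k/k')$.

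The main step---which I regard as the key obstacle---is to upgrade the weak pole bound $\|\alpha\|_v \le p^{1/e_v}$ (from $\alpha \in \pi_v^{-1} O_v$) to the integrality bound $\|\alpha\|_v \le 1$, using this ramification identity. Every non-zero element of $k_u'$ has $\|\cdot\|_v$-value in the cyclic group generated by $\|\pi_u'\|_v = p^{-1/e_u'}$, so $\|\alpha\|_v = p^{-m/e_u'}$ for some $m \in \Z$; the pole bound reads $m \ge -e_u'/e_v = -1/[k:k']$, and under $[k:k'] \ge 2$ integrality of $m$ forces $m \ge 0$, giving $\|\alpha\|_v \le 1$. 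Applying the argument at every $v \in S$ with $\|\alpha\|_v > 1$, I conclude $\alpha$ is integral at every non-archimedean place of $k'$, while $\|\alpha\|_{u'} < 1$ at every archimedean $u'$ of $k'$. Hence $N_{k'/\Q}(\alpha)$ is a rational integer of absolute value less than $1$, so it is zero, contradicting $\alpha \ne 0$. Note the subtlety compared to Theorem~\ref{thm1}: $f_v(k/\Q)=1$ does not directly force $[k:k']=1$ (as $[k_w:\Q_\infty]=1$ did in the archimedean case), but rather makes $[k:k']$ equal to the ramification index, and hence divide $v(\alpha)$, which is what closes the argument.
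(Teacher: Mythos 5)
Your proof is correct and follows essentially the same route as the paper's: both hinge on the product formula producing a place $v\in S$ with $|\alpha|_v>1$, the constancy of $\|\alpha\|_{v'}$ over the place $u$ of $k'=\Q(\alpha)$ to force $v$ to be the unique place of $k$ above $u$, and the multiplicativity of $e$ and $f$ in the tower $\Q\subseteq k'\subseteq k$ combined with the discreteness of the value group of $k'_u$. The only cosmetic difference is that you run the resulting divisibility constraint as a contradiction with $[k:k']\ge 2$ (closing with integrality of the norm), whereas the paper equates $\log\|\alpha\|_w$ computed in $k$ and in $k'$ to obtain $m\,e_w(k/k')=1$ and concludes directly.
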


\begin{proof}  A nonzero point $\alpha$ contained in the set (\ref{gen51}) satisfies the inequality
\begin{equation*}\label{gen52}
|\alpha|_v < 1
\end{equation*}
at each infinite place $v$ of $k$.  Hence it must satisfy
\begin{equation}\label{gen53}
1 < |\alpha|_w \le |\pi_w|_w^{-1}
\end{equation}
for at least one place $w$ from the set $S$.  Because the multiplicative value group of $|\ |_w$ on $k^{\times}$ is
given by
\begin{equation*}\label{gen54}
\big\{|\pi_w|_w^m: m \in \Z\big\}, 
\end{equation*}
the inequality (\ref{gen53}) implies that
\begin{equation}\label{gen55}
|\alpha|_w = |\pi_w|_w^{-1} = q^{1/d}, 
\end{equation}
where $q$ is the unique prime number in $P$ such that $w|q$.  Also, the height of $\alpha$ satisfies the bound
\begin{align}\label{gen56}
\begin{split}
H(\alpha) = \prod_v \max\{1, |\alpha|_v\}
          \le \prod_{v \in S} \max\{1, |\pi_v|_v^{-1}\}
          = \Big\{\prod_{p \in P} p\Big\}^{1/d}.
\end{split}
\end{align}
This verifies the inequality on the right of (\ref{gen63}).

Next we assume that $\Q(\alpha) = k^{\prime} \subseteq k$.  Let $u$ be a place of $k^{\prime}$ such
that $u|q$ and $w|u$.  Then the ramification indices satisfy the identity
\begin{equation}\label{gen57}
e_w(k/\Q) = e_w(k/k^{\prime}) e_u(k^{\prime}/\Q).
\end{equation} 
And we can write (\ref{gen55}) as
\begin{equation}\label{gen58}
\log \|\alpha\|_w = \frac{\log q}{e_w(k/\Q)}.
\end{equation}
As $\alpha$ belongs to $k^{\prime}$, we also get
\begin{equation}\label{gen59}
\log \|\alpha\|_w = \log \|\alpha\|_u = \frac{m \log q}{e_u(k^{\prime}/\Q)}
\end{equation}
for some positive integer $m$.  Combining (\ref{gen58}) and (\ref{gen59}) leads to the identity
\begin{equation}\label{gen60}
m e_w(k/\Q) = e_u(k^{\prime}/\Q).
\end{equation}
Then (\ref{gen57}) and (\ref{gen60}) imply that $m = 1$ and 
\begin{equation}\label{gen70}
e_w(k/k^{\prime}) = 1.
\end{equation}

Again because $\alpha$ belongs to the subfield $k^{\prime}$, the map $v\mapsto \|\alpha\|_v$ is constant on the
collection of places $v$ of $k$ such that $v|u$.  By our construction of $S$ there is only one place $v$ in $S$ such that
$v|q$ and $1 < |\alpha|_v$.  Hence the collection of places $v$ of $k$ such that $v|u$ consists of exactly the 
place $w$.  Using the hypothesis
\begin{equation}\label{gen71}
f_w(k/\Q) = f_w(k/k^{\prime}) f_u(k^{\prime}/\Q) = 1
\end{equation}
and (\ref{gen70}), this implies that
\begin{equation*}\label{gen72}
[k : k^{\prime}] = [k_w : k_u^{\prime}] = e_w(k/k^{\prime}) f_w(k/k^{\prime}) = 1.
\end{equation*} 
We have shown that $\Q(\alpha) = k^{\prime} = k$.
\end{proof}

\section{Application of Chebotarev's density theorem and GRH}%%%%%%%%%%%%%%%%%%%%%%%%%%%%%%%%%%%%%%%%%%

Let $L/K$ be a normal extension of algebraic number fields, and let $C$ denote a conjugacy class in the Galois
group $\Aut(L/K)$.  Let $\pp$ denote a prime ideal in the ring $O_K$ of algebraic integers in $K$.  If $\pp$
is unramified in $L$, we use the Artin symbol
\begin{equation*}\label{cheb1}
\biggl[\frac{L/K}{\pp}\biggr]
\end{equation*}
attached to $\pp$ to denote the conjugacy class of Frobenius automorphisms that correspond to prime ideals $\pP$ in $O_L$
such that $\pP|\pp$.  Then for $2 \le x$ we write $\pi_C(x; L/K)$ for the cardinality of the set
of prime ideals $\pp$ in $O_K$ such that $\pp$ is unramified in $L$, 
\begin{equation*}\label{cheb2}
\biggl[\frac{L/K}{\pp}\biggr] = C, 
\end{equation*}
and
\begin{equation*}\label{cheb3}
\Norm_{K/\Q}(\pp) \le x.
\end{equation*}
In its most basic form the Chebotarev density theorem (see \cite{cheb1926}) asserts that
\begin{equation}\label{cheb4}
\lim_{x\rightarrow \infty} \frac{\pi_C(x; L/K)}{\Li(x)} = \frac{|C|}{[L:K]},
\end{equation}
where $|C|$ is the cardinality of the conjugacy class $C$, and
\begin{equation*}\label{cheb5}
\Li(x) = \int_2^x \frac{1}{\log t}\ \dt
\end{equation*}
is the logarithmic integral.  

For our purposes it is useful to have an explicit estimate for the rate of 
convergence in (\ref{cheb4}).  And it is important that the estimate apply for relatively small values of the parameter $x$.
Such an explicit, but conditional, estimate is given by a well known result of Lagarias and Odlyzko \cite[Theorem 1.1]{lo1977}, which 
we now describe.  Let $\zeta_L(s)$ denote the Dedekind zeta-function associated to the
number field $L$, where $s = \sigma + it$.  We assume that $\zeta_L(s)$ satisfies the Generalized Riemann Hypothesis.  Then 
\cite[Theorem 1.1]{lo1977} implies that there exists an absolute and effectively computable constant  $c_1 \ge 1$,
such that, if $2 \le x$ then
\begin{equation}\label{cheb8}
\biggl|\pi_C(x; L/K) - \frac{|C|}{[L:K]} \Li(x)\biggr| \le c_1x^{\h}\bigl(\log |\Delta_L| + [L:\Q] \log x\bigr).
\end{equation}

Let $k$ be an algebraic number field of degree $d$ over $\Q$.  We apply the estimate (\ref{cheb8}) with $L$ equal to the 
Galois closure of the extension $k/\Q$, and with $K = \Q$.  Using the conjugacy class $C = \{1\}$, this will establish the existence 
of a rational prime number $p$ that can be used to satisfy the hypotheses of Theorem \ref{thmgen2}.

\begin{lemma}\label{cheblem1}  Let $k$ be an algebraic number field of degree $d \ge 2$ over $\Q$, and let $l$ be the Galois closure 
of the extension $k/\Q$.  Assume that the Dedekind zeta-function $\zeta_l(s)$ associated to $l$ satisfies the Generalized Riemann 
Hypothesis, and let $C$ be a conjugacy class in the Galois group $\Aut(l/\Q)$.  If
\begin{equation}\label{cheb20}
(15)^{20} c_1^{20} (d!)^{60} \le |\Delta_k|,
\end{equation} 
then we have
\begin{equation}\label{cheb21}
1 \le \pi_C\bigl(2 |\Delta_k|^{\h}; l/\Q\bigr) - \pi_C\bigl(|\Delta_k|^{\h}; l/\Q\bigr).
\end{equation}
\end{lemma}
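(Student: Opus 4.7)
The plan is to apply the Lagarias--Odlyzko bound (\ref{cheb8}) at the two endpoints $x_2 = 2|\Delta_k|^{\h}$ and $x_1 = |\Delta_k|^{\h}$ and take the difference. Since $[l:\Q] \le d!$ and $|C| \ge 1$, the main term is bounded below by
\begin{equation*}
\frac{|C|}{[l:\Q]}\bigl(\Li(2|\Delta_k|^{\h}) - \Li(|\Delta_k|^{\h})\bigr) \ge \frac{1}{d!}\cdot \frac{|\Delta_k|^{\h}}{\log(2|\Delta_k|^{\h})},
\end{equation*}
using the trivial inequality $\Li(2y)-\Li(y) = \int_y^{2y}\dt/\log t \ge y/\log(2y)$. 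At each endpoint the error in (\ref{cheb8}) is at most $c_1 \sqrt{2}\,|\Delta_k|^{1/4}\bigl(\log|\Delta_l| + d!\log(2|\Delta_k|^{\h})\bigr)$, so the target inequality (\ref{cheb21}) will follow once the main term exceeds twice this error plus $1$.

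The technical step is bounding $\log|\Delta_l|$ in terms of $\log|\Delta_k|$ and $d$. The primes ramifying in $l/\Q$ coincide with those ramifying in $k/\Q$, because $l$ is generated inside $\oQ$ by the $\Q$-conjugates of any primitive element of $k$, and an unramified prime in $k$ stays unramified in each conjugate and hence in their compositum. For each ramified prime $p$ with ramification index $e_p$ in $l$, the standard bound on the local different ($v_\pP(\mathfrak{d}_l) \le e_\pP(1 + v_p(e_\pP))$) combined with $\sum_{\pP|p} e_\pP f_\pP = [l:\Q] \le d!$ gives $v_p(\Delta_l) \le d!\,(1 + \log_2 d!)$. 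Summing over the (finitely many) ramified primes and using $\sum_{p\mid N}\log p \le \log N$ yields a bound of the shape
\begin{equation*}
\log|\Delta_l| \le (d!)^2 \log|\Delta_k|,
\end{equation*}
with plenty of slack. This is the step I expect to be the main obstacle, both to state in clean form and to verify with enough care that the exponents work out.

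Combining the two bounds, the difference $\pi_C(x_2; l/\Q) - \pi_C(x_1; l/\Q)$ is at least
\begin{equation*}
\frac{|\Delta_k|^{\h}}{d!\, \log(2|\Delta_k|^{\h})} \;-\; 4 c_1 |\Delta_k|^{1/4} \bigl((d!)^2 + d!\bigr)\log|\Delta_k|,
\end{equation*}
and it remains to check that the hypothesis $(15)^{20} c_1^{20} (d!)^{60} \le |\Delta_k|$ forces this quantity to be $\ge 1$. Schematically, the required condition is $|\Delta_k|^{1/4} \gg c_1 (d!)^3 (\log|\Delta_k|)^2$, which is easily absorbed by the very loose hypothesis: the exponent $60$ on $d!$ swallows the $(d!)^3$ factor (with massive room), $c_1^{20}$ swallows $c_1$, and a trivial bound such as $\log|\Delta_k| \le |\Delta_k|^{1/40}$ clears the logarithms. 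Thus the main term dominates the total error by a wide margin and (\ref{cheb21}) follows. After the discriminant bound is nailed down, the rest is routine bookkeeping calibrated to fit inside the exponents $20$ and $60$ chosen in (\ref{cheb20}).
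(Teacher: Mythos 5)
Your proposal follows essentially the same route as the paper: apply the Lagarias--Odlyzko bound at the two endpoints $|\Delta_k|^{\h}$ and $2|\Delta_k|^{\h}$, lower-bound the main term by $\tfrac{1}{d!}\cdot|\Delta_k|^{\h}/\log\bigl(2|\Delta_k|^{\h}\bigr)$, control $\log|\Delta_l|$ by $O\bigl((d!)^2\bigr)\log|\Delta_k|$ using the fact that only primes ramified in $k$ ramify in $l$ together with a bound on the power to which such a prime divides $\Delta_l$ (you derive this from the local different estimate; the paper simply cites a bound of $2[l:\Q]^2$ on that power), and then check that the hypothesis on $|\Delta_k|$ forces the main term to dominate. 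The one concrete flaw is your proposed shortcut for the final step: the inequality $\log|\Delta_k|\le|\Delta_k|^{1/40}$ is \emph{false} at the bottom of the admissible range --- for $d=2$ and $c_1$ near $1$ the hypothesis only guarantees $\log|\Delta_k|\approx 96$ while $|\Delta_k|^{1/40}\approx 11$, and $\log y\le y^{1/40}$ only kicks in around $y\ge e^{216}$. The conclusion you need, namely that $D^{1/4}>15c_1(d!)^3(\log D)^2$ for $D\ge\bigl(15c_1(d!)^3\bigr)^{20}$, is nonetheless true: write $A=15c_1(d!)^3$, note that $D\mapsto D^{1/4}/(\log D)^2$ is increasing for $D>e^8$, and check at $D=A^{20}$ that $A^5>400\,A(\log A)^2$, which holds since $A\ge 120$. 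So the argument closes, but not via the logarithm bound you wrote down.
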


\begin{proof}  As $l$ is the Galois closure of $k/\Q$ we find that $[l:\Q] \le d!$, and
\begin{equation}\label{cheb22}
\log |\Delta_l| \le 2(d!)^2 \log |\Delta_k|.
\end{equation}
The inequality (\ref{cheb22}) follows because a rational prime that ramifies in $l$ must also ramify in $k$.  Then by 
\cite[Theorem B.2.12.]{BG} the order to which a rational prime divides $\Delta_l$ is bounded from above by $2[l:\Q]^2$.  We
apply these observations to the inequality (\ref{cheb8}) with $L = l$ and $K = \Q$.  It follows that for $2 \le x$ we have
\begin{equation}\label{cheb23}
\biggl|\pi_C(x; l/\Q) - \frac{|C|}{[l:\Q]} \Li(x)\biggr| \le 2 c_1(d!)^2 x^{\h}\bigl(\log |\Delta_k| + \log x\bigr).
\end{equation}

If the nonnegative integer on the right of (\ref{cheb21}) is zero, then (\ref{cheb23}) implies that
\begin{equation}\label{cheb24}
\Li\bigl(2|\Delta_k|^{\h}\bigr) - \Li\bigl(|\Delta_k|^{\h}\bigr) \le 10 c_1 (d!)^3 |\Delta_k|^{\frac14} \log |\Delta_k|.
\end{equation}
Therefore we get
\begin{equation*}\label{cheb24.5}
\frac{2|\Delta_k|^{\h}}{3\log |\Delta_k|} \le \frac{|\Delta_k|^{\h}}{\log 2|\Delta_k|^{\h}}
                                          \le \int_{|\Delta_k|^{\h}}^{2|\Delta_k|^{\h}} \frac{1}{\log t}\ \dt
                                          \le 10 c_1 (d!)^3 |\Delta_k|^{\frac14} \log |\Delta_k|,
\end{equation*}
and then
\begin{equation}\label{cheb25}
|\Delta_k|^{\h} \le 15 c_1 (d!)^3 |\Delta_k|^{\frac14} \bigl(\log |\Delta_k|\bigr)^2.
\end{equation}
It is now obvious that (\ref{cheb25}) is false if $|\Delta_k|$ is sufficiently large.  An elementary calculation shows
that (\ref{cheb25}) is false if $|\Delta_k|$ satisfies the inequality (\ref{cheb20}).  Therefore (\ref{cheb20}) implies that 
the nonnegative integer on the right of (\ref{cheb21}) is positive.
\end{proof}

\section{Proof of Theorem \ref{thm2}}%%%%%%%%%%%%%%%%%%%%%%%%%%%%%%%%%%%%%%%%%%%%%%%%

We assume that the number field $k$ satisfies the hypotheses of Theorem \ref{thm2}, and we also assume that
\begin{equation*}\label{cheb30}
(15)^{20} c_1^{20} (d!)^{60} \le |\Delta_k|.
\end{equation*} 
We apply Lemma \ref{cheblem1} with the conjugacy class $C = \{1\}$.  It follows that there exists a rational prime number $p$
such that $\langle p\rangle$ splits completely in $O_l$ and
\begin{equation*}\label{cheb31}
|\Delta_k|^{\h} < p \le 2 |\Delta_k|^{\h}.
\end{equation*}
Then $\langle p \rangle$ splits completely in $O_k$, and therefore the residue class degrees of all prime ideal factors
of $\langle p \rangle$ in $O_k$ are equal to $1$.  As each prime ideal factor corresponds to a non-archimedean place of $k$,
we find that the hypotheses of Theorem \ref{thmgen2} are satisfied with $P = \{p\}$.  We
conclude that there exists an element $\alpha$ in $k$ such that $k = \Q(\alpha)$, and
\begin{equation}\label{cheb32}
H(\alpha) \le p^{1/d} \le 2 |\Delta_k|^{1/2d}.
\end{equation}

By Hermite's theorem there are only finitely many algebraic number fields $k$ having degree $d$ and satisfying the inequality
\begin{equation}\label{cheb33}
\Delta_k| < (15)^{20} c_1^{20} (d!)^{60}.
\end{equation}
As these can be effectively determined, there exists an effectively computable positive number $B = B(d) \ge 2$ such 
that the conclusion (\ref{gen90}) holds for each field $k$ having degree $d$ and satisfying (\ref{cheb33}).  In 
view of (\ref{cheb32}), the conclusion (\ref{gen90}) holds for all fields $k$ of degree $d$.

\section{The field $\Q\bigl(\sqrt{-163}\bigr)$}%%%%%%%%%%%%%%%%%%%%%%%%%%%%%%%%%%%%%%%%%%%%%%%%%

Theorem \ref{thmgen2} can be used to establish the existence of a generator $\alpha$ of
$k/\Q$ with relatively small height.  The bound obviously depends on identifying a finite set $P$ of rational prime
numbers that satisfies the hypotheses of that result.  It may be of interest to observe that there are nontrivial examples
where the bound obtained by applying Theorem \ref{thmgen2} is sharp.  In particular this is so for the imaginary
quadratic field $\Q\bigl(\sqrt{-163}\bigr)$.  

\begin{lemma}\label{lemquad1}  Let $d \le -1$ be a square free integer, let
\begin{equation*}\label{quad1}
f(x) = ax^2 + bx + c
\end{equation*}
be a polynomial in $\Z[x]$ with
\begin{equation*}\label{quad2}
1 \le a,\quad 1\le c,\quad \gcd(a, b, c) = 1,\quad\text{and}\quad b^2 - 4ac = de^2, 
\end{equation*}
where $e$ is a nonzero integer.  If $\alpha$ is a root of $f$, then $\Q(\alpha) = \Q(\sqrt{d})$ and
\begin{equation}\label{quad3}
H(\alpha) = \max\{a, c\}^{\h}.
\end{equation}
\end{lemma}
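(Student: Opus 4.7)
The proof falls out of the quadratic formula combined with the standard fact that for an algebraic number $\alpha$ of degree $n$ with minimal polynomial $f \in \Z[x]$ (primitive), one has $H(\alpha) = M(f)^{1/n}$, where $M(f)$ is the Mahler measure.

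First I would compute the roots of $f$ explicitly. By the quadratic formula, the two roots are
\begin{equation*}
\alpha,\bbalpha = \frac{-b \pm e\sqrt{d}}{2a}.
\end{equation*}
Since $d \le -1$ is square free, $\sqrt{d} \notin \Q$, hence $\alpha \notin \Q$ and $\Q(\alpha) = \Q(\sqrt{d})$. In particular $\alpha$ has degree $2$ and $\bbalpha$ is its complex conjugate. Because $\gcd(a,b,c)=1$ and $f$ is irreducible over $\Q$ (its roots are non-real), $f$ is a primitive, integral multiple of the minimal polynomial of $\alpha$, so
\begin{equation*}
H(\alpha)^2 = M(f) = a\cdot\max\{1,|\alpha|\}\cdot\max\{1,|\bbalpha|\}.
\end{equation*}

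Next I would use Vieta to compute $|\alpha|$. Since $\alpha\bbalpha = c/a$ and $a,c \ge 1$, we get $|\alpha| = |\bbalpha| = \sqrt{c/a}$. Splitting into two cases:
\begin{equation*}
M(f) = \begin{cases} a\cdot(c/a) = c = \max\{a,c\} & \text{if } c \ge a,\\ a\cdot 1\cdot 1 = a = \max\{a,c\} & \text{if } c \le a. \end{cases}
\end{equation*}
Taking square roots yields $H(\alpha) = \max\{a,c\}^{1/2}$, as claimed.

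There is no real obstacle; the only point that needs any care is verifying that $f$ gives the correct normalization for the Mahler-measure formula. This is where the hypotheses $a \ge 1$, $\gcd(a,b,c)=1$, and the square-freeness of $d$ all enter, ensuring that $f$ is primitive with leading coefficient $a$ and irreducible over $\Q$, so that $H(\alpha)^2$ literally equals $M(f)$ without any rescaling.
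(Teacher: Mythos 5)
Your proof is correct and follows essentially the same route as the paper: both compute $M(f) = a\max\{1,|\alpha|\}\max\{1,|\bbalpha|\} = \max\{a,c\}$ using that $\alpha,\bbalpha$ are complex conjugates with $\alpha\bbalpha = c/a$, and then invoke $M(f) = H(\alpha)H(\bbalpha) = H(\alpha)^2$ for the primitive irreducible polynomial $f$. Your explicit use of the quadratic formula and the case split on $c\ge a$ versus $c\le a$ is just a slightly more verbose form of the paper's one-line identity $a\max\{1,\alpha\bbalpha\} = \max\{a,c\}$.
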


\begin{proof}  That $\Q(\alpha) = \Q(\sqrt{d})$ is obvious.  For the remainder of the proof
we work in $\Q(\sqrt{d})$ embedded in $\C$.  Then complex conjugation is the unique nontrivial automorphism of $\Q(\sqrt{d})$,
and the distinct roots of $f$ are $\alpha$ and $\bbalpha$.
Hence the Mahler measure of $f$ is
\begin{equation*}\label{quad4}
M(f) = a \max\{1, |\alpha|\} \max\{1, |\bbalpha|\} = a \max\{1,  \alpha\bbalpha\} = \max\{a, c\}.
\end{equation*}
Because $f$ is the unique irreducible polynomial in $\Z[x]$ with positive leading coefficient and a root at $\alpha$,
the Mahler measure is also given by
\begin{equation*}\label{quad5}
M(f) = H(\alpha) H(\bbalpha) = H(\alpha)^2. 
\end{equation*}
The result follows by combining these identities.
\end{proof}

A rational prime $p$ will satisfy the hypotheses (\ref{gen30}) for the field $\Q(\sqrt{-163})$ if and only if either $p$ is
odd and $-163$ is a quadratic residue  modulo $p$, or $p = 163$.  The smallest odd prime number $p$ such that $-163$ is
a quadratic residue modulo $p$ is $41$.  We have $\Delta_k = -163$ and therefore
\begin{equation*}\label{quad71}
c_k = 2.850 \cdots < \sqrt{41} = 6.403 \cdots.
\end{equation*}
We conclude that Theorem \ref{thmgen2} applies with $P = \{41\}$, and asserts that $\Q(\sqrt{-163})$ has a generator $\alpha$
such that
\begin{equation*}\label{quad72}
H(\alpha) \le \sqrt{41}.
\end{equation*}

Now let $\alpha^{\prime}$ be a root of the polynomial $x^2 + x + 41$, which has discriminant $-163$.  Clearly $\alpha^{\prime}$ 
generates the imaginary quadratic field $\Q\bigl(\sqrt{-163}\bigr)$.  Then by Lemma \ref{lemquad1} we have 
\begin{equation*}\label{quad73}
H\bigl(\alpha^{\prime}\bigr) = \sqrt{41}.
\end{equation*}
The set of polynomials $f(x) = ax^2 + bx + c$ in $\Z[x]$ such that
\begin{equation*}\label{quad74}
1 \le a,\quad 1\le c,\quad \gcd(a, b, c) = 1,\quad\text{and}\quad b^2 - 4ac = (-163)e^2, 
\end{equation*}
where $e$ is a nonzero integer, and
\begin{equation}\label{quad75}
\max\{a, c\} \le 41,
\end{equation}
is obviously finite.  It is then a simple matter, using Lemma \ref{lemquad1}, to check that the the field 
$\Q\bigl(\sqrt{-163}\bigr)$ does not have a generator with height strictly
smaller than $\sqrt{41}$.  Therefore the inequality obtained in Theorem \ref{thmgen2} for this field is sharp.

\bibliographystyle{amsplain}
\bibliography{literature}

\end{document}